\documentclass[12pt]{amsart}

\usepackage[utf8]{inputenc}
\usepackage{microtype}
\usepackage{amsfonts}
\usepackage{amsmath}
\usepackage{graphicx}
\usepackage{float}
\usepackage{color}
\usepackage{hyperref}
\usepackage{tikz-cd}


\usepackage{amssymb}
\usepackage{enumitem}
\usepackage{mathrsfs}

\usepackage{tikz}
\usetikzlibrary{topaths}
\usetikzlibrary{calc}

\usepackage{a4wide}

\usepackage{empheq}


\newtheorem{theorem}{Theorem}[section]
\newtheorem*{theorem*}{Theorem}

\newtheorem*{proposition*}{Proposition}
\newtheorem{corollary}[theorem]{Corollary}
\newtheorem*{corollary*}{Corollary}
\newtheorem{conjecture}[theorem]{Conjecture}

\newtheorem*{conjecture*}{Conjecture}
\newtheorem{question}[theorem]{Question}
\newtheorem*{question*}{Question}

\newtheorem*{main:main}{Theorem~\ref{thrm:good_action_fp}}
\newtheorem*{main:sep_all_raags}{Corollary~\ref{cor:sep_all_raags}}

\theoremstyle{definition}
\newtheorem{definition}[theorem]{Definition}

\newtheorem{example}[theorem]{Example}

\newcommand{\Z}{\mathbb{Z}}
\newcommand{\N}{\mathbb{N}}

\newcommand{\C}{\mathfrak{C}}

\newcommand{\defeq}{\mathbin{\vcentcolon =}}

\DeclareMathOperator{\Aut}{Aut}
\DeclareMathOperator{\Out}{Out}
\DeclareMathOperator{\AAut}{AAut}

\DeclareMathOperator{\Symm}{Symm}

\DeclareMathOperator{\F}{F}

\newcommand{\brF}%
   {\operatorname{br}\!F}                 

\newcommand{\brV}%
   {\operatorname{br}\!V}                 
   
\newcommand{\brAut}%
   {\operatorname{br}\!\Aut}                 
   
\newcommand{\brAAut}%
   {\operatorname{br}\!\AAut}                 
   
\newcommand{\brW}%
   {\operatorname{br}\!W}                 
   
\newcommand{\brR}%
   {\operatorname{br}\!R}
   
\numberwithin{equation}{section}


%
%

\begin{document}

\title{A taste of twisted Brin--Thompson groups}
\date{\today}

\keywords{Thompson group, Brin--Thompson group, simple group, quasi-isometric embedding, finiteness properties, oligomorphic, Boone--Higman conjecture}

\author[M.~C.~B.~Zaremsky]{Matthew C.~B.~Zaremsky}
\address{Department of Mathematics and Statistics, University at Albany (SUNY), Albany, NY 12222}
\email{mzaremsky@albany.edu}

\begin{abstract}
This note serves as a short and reader-friendly introduction to twisted Brin--Thompson groups, which were recently constructed by Belk and the author to provide a family of simple groups with a variety of interesting properties. Most notably, twisted Brin--Thompson groups can be used to show that every finitely generated group quasi-isometrically embeds as a subgroup of a finitely generated simple group. Another important application is a concrete construction of a family of simple groups with arbitrary finiteness length. In addition to giving a concise introduction to the groups and these applications, we also prove here a strengthening of one of the results from the original paper. Namely, we prove that any finitely presented group acting faithfully and oligomorphically on a set, with finitely generated stabilizers of finite subsets, embeds in a finitely presented simple group. We believe this could potentially lead to future progress on resolving the Boone--Higman Conjecture for certain groups.
\end{abstract}

\maketitle
\thispagestyle{empty}

\section{Introduction}

The Brin--Thompson groups $sV$ ($s\in\N$), also called higher-dimensional Thompson groups, were originally introduced by Brin in \cite{brin04}. They serve as a nice, natural infinite family of finitely presented infinite simple groups, generalizing the classical Thompson's group $V=1V$. The fact that they are all finitely generated and simple was proved by Brin \cite{brin04,brin10}, and finite presentability was shown by Hennig--Matucci \cite{hennig12}. Since their introduction, these groups have turned out to have an array of remarkable properties. For instance, while $V=1V$ has decidable torsion problem, Belk--Bleak showed that none of the other $sV$ do \cite{belk17}. Also, Belk--Bleak--Matucci proved that every right-angled Artin group (RAAG) embeds into some $sV$ \cite{belk20}, and very recently Salo showed that every RAAG already embeds into $2V$ \cite{salo}. This is striking, since very few RAAGs embed into $V$, by virtue of $V$ not containing $\Z*\Z^2$ \cite{bleak13}. Another important result, due to Matte Bon, is that $mV$ embeds into $nV$ if and only if $m\le n$ \cite{mattebon}. Work has also been done to understand the metric properties of $sV$ \cite{burillo10} and higher finiteness properties \cite{kochloukova13,fluch13}; in particular it turns out every $sV$ is of type $\F_\infty$.

In \cite{belkZ}, Jim Belk and the author introduced ``twisted'' variants of the Brin--Thompson groups. Given $G\le \Symm(\{1,\dots,s\})$, the version of $sV$ twisted by $G$ is denoted $sV_G$. More generally, given any set $S$ and any $G\le \Symm(S)$, one gets a twisted Brin--Thompson group $SV_G$. The idea is, $sV$ acts on the direct product $\C^s$ of $s$ many copies of the Cantor set $\C$, and in $sV_G$ one allows $G$ to shuffle these copies around, potentially in different ways in different regions of $\C^s$. When using an infinite set $S$, the Brin--Thompson group $SV$ acts on the product space $\C^S$, and the twisted version $SV_G$ allows $G$ to shuffle things in $\C^S$ around (potentially in different ways in different regions) via its action on $S$. More precise details will be given in Section~\ref{sec:groups}.

The $SV_G$ turn out to have a number of important properties, in particular they are always simple. Finite simple groups are, quite famously, classified, but infinite simple groups remain extremely mysterious. Since any sort of classification of infinite simple groups is way beyond the scope of current mathematics, an active current direction of research is to find new examples of simple groups with interesting or unexpected properties. Just to highlight a few recent examples: Lodha found a finitely presented simple group that acts by homeomorphisms on the circle, but not by $C^1$-diffeomorphisms or piecewise linear homeomorphisms \cite{lodha19}, Hyde and Lodha found examples of finitely generated left-orderable simple groups \cite{hyde19}, and Matte Bon and Triestino found further such examples with additional interesting properties \cite{mattebon20}, e.g., they admit no infinite subgroups with property $(T)$.

Perhaps the most striking application of twisted Brin--Thompson groups is that if $G$ and $SV_G$ are finitely generated, then $G$ embeds quasi-isometrically as a subgroup of $SV_G$. This proves:

\medskip

\begin{theorem*}\cite{belkZ}
Every finitely generated group quasi-isometrically embeds as a subgroup of a finitely generated simple group.
\end{theorem*}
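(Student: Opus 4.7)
My strategy is to reduce this theorem to the quasi-isometric embedding statement recalled immediately above it: given an arbitrary finitely generated group $G$, it suffices to exhibit a set $S$ together with a faithful $G$-action $G \hookrightarrow \Symm(S)$ such that the twisted Brin--Thompson group $SV_G$ is itself finitely generated. Once such an $S$ is in hand, the preceding result provides a quasi-isometric embedding of $G$ into $SV_G$, and the general fact that every $SV_G$ is simple finishes the proof.

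For the set $S$, the natural candidate is $S \defeq G$ as an underlying set, with $G$ acting on $S$ by left multiplication. This action is free, hence faithful, so $G$ embeds into $\Symm(S)$ and $SV_G$ is well-defined. Among the attractive features of this choice are that there is a single $G$-orbit on $S$ and that all point stabilizers are trivial --- both of which bode well for finite generation of $SV_G$.

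The crux is thus to show that $SV_G$ is finitely generated whenever $G$ is. I would build a candidate finite generating set from two ingredients: (a) a finite collection of ``local'' Brin--Thompson-style tree moves operating on only a few factors of $\C^S$, and (b) the elements of $SV_G$ coming from a fixed finite generating set of $G$, each acting on $\C^S$ by a single global shuffle of the factors induced by the left regular action. The claim is that any element of $SV_G$ --- which by definition is specified by a finite pair of dyadic decompositions of $\C^S$ together with a local $G$-twist on each matched pair of cylinders --- can be written as a word in these generators. The key geometric point is that the left regular $G$-action on $S$ is transitive, so conjugating any local move from (a) by the global generators in (b) transports it to a local move on any chosen factor of $\C^S$. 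A finite library of local ``seed'' generators thus suffices to produce every local move appearing in $SV_G$.

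The main obstacle is this finite generation step: one must organize the Brin--Thompson combinatorics so that the $G$-conjugation argument really covers every possible cylinder bijection from only finitely many seeds, and so that the local $G$-twists appearing on cylinders can be expressed in turn through the generators of type (b). I expect the cleanest way to carry this out is by induction on the complexity (e.g., the number of cylinders) of the defining data of an element of $SV_G$, reducing each element to a bounded-complexity word by successively peeling off a local-plus-global pair of generators.
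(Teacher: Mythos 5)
Your overall strategy matches the paper's exactly: take $S = G$ with the left-regular action, observe that $G \hookrightarrow SV_G = GV_G$ as the subgroup of basic twist homeomorphisms, and then conclude via the quasi-isometric embedding theorem (Theorems~B and~C of \cite{belkZ}) together with simplicity of $SV_G$ (\cite[Theorem~3.4]{belkZ}). The one remaining ingredient is finite generation of $GV_G$, and here is where your write-up has a gap. The paper disposes of this point instantly by citing Theorem~A of \cite{belkZ} — recalled in Section~\ref{sec:properties} — which says $SV_G$ is finitely generated if and only if $G$ is finitely generated \emph{and} the $G$-action on $S$ has finitely many orbits; since the left-regular action is transitive (a single orbit), finite generation of $GV_G$ follows at once from finite generation of $G$. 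You instead set out to prove this finite generation from scratch. Your sketch is in the right spirit — transitivity of the left-regular action is indeed the feature that lets a finite library of local seed moves, conjugated by the global twists $\tau_\gamma$ for $\gamma$ in a finite generating set of $G$, reach every cone move appearing in $GV_G$, and the actual proof of Theorem~A in \cite{belkZ} proceeds along broadly similar lines — but as written it is a plan rather than a proof: "I would build,'' "the claim is,'' and "I expect the cleanest way to carry this out is by induction'' all signal that the crucial combinatorial step is left undone. For a blind proof this is a genuine gap. The efficient fix is to recognize that the finite generation you need is precisely the transitive-orbit case of the already-available Theorem~A and cite it, rather than attempting to re-derive a nontrivial theorem.
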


\medskip

This strengthens a result of Bridson \cite{bridson98}, and provides a geometric version of the classical result due to Hall that every finitely generated group embeds in a finitely generated simple group \cite{hall74}. This theorem was also proved independently, through completely different means, by Darbinyan and Steenbock \cite{darbinyan}.

\medskip

Another application of twisted Brin--Thompson groups is that they provide a family of simple groups with arbitrary finiteness length. Recall that a group is of \emph{type $F_n$} if it acts geometrically (that is, properly and cocompactly) on an $(n-1)$-connected CW-complex. In a sense, type $\F_n$ is a group theoretic analogue of the topological property of being $(n-1)$-connected. In particular, type $\F_1$ is equivalent to finite generation, and type $\F_2$ is equivalent to finite presentability. Equivalently, a group is of type $\F_n$ if and only if it admits a classifying space with finite $n$-skeleton. We also say \emph{type $\F_\infty$} to mean type $\F_n$ for all $n$, and note that every group is of type $\F_0$. The \emph{finiteness length} of a group is the largest $n\in\N\cup\{0,\infty\}$ such that the group is of type $\F_n$.

In \cite{skipper19}, Skipper, Witzel, and the author constructed a family of simple groups with arbitrary finiteness length; these were the first such examples known for finiteness lengths other than $0$, $1$, and $\infty$. These examples came from so called R\"over--Nekrashevych groups, and rely on utilizing very specific constructions of self-similar groups. Twisted Brin--Thompson groups provide another such family, this time without relying on self-similar groups, thus recovering the following:

\medskip

\begin{theorem*}\cite{skipper19}
For all $n\in\N$ there exists a simple group of type $\F_{n-1}$ but not type $\F_n$.
\end{theorem*}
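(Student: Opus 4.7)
The plan is to realize each simple group in the claimed family as a twisted Brin--Thompson group $SV_G$, for a carefully chosen input group $G$. The guiding principle of \cite{belkZ} is that the finiteness properties of $SV_G$ are controlled by those of $G$ together with the combinatorics of its oligomorphic action on $S$, while simplicity comes for free. So the strategy reduces to exhibiting, for each $n\ge 1$, a group $G_n$ of type $\F_{n-1}$ but not of type $\F_n$ equipped with a faithful oligomorphic action whose stabilizers of finite subsets have matching finiteness type.

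A natural choice for $G_n$ is the Houghton group $H_n$ (or a mild variant), acting on $S_n = \{1,\dots,n\}\times\N$ by eventual translations along its $n$ rays. By Brown's classical computation, $H_n$ has finiteness length exactly $n-1$; the action is plainly oligomorphic, and stabilizers of finite subsets are again of Houghton type, so of the required finiteness. Given such $G_n$, I would invoke the main finiteness theorem of \cite{belkZ}, which runs a Morse-theoretic analysis of the action of $SV_{G_n}$ on its Stein--Farley-style complex $\Stein$: oligomorphicity provides cocompactness of sublevel sets, the stabilizer hypothesis provides cell-stabilizer finiteness, and a matching/nerve argument provides $(n-2)$-connectedness of descending links. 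Brown's criterion then yields type $\F_{n-1}$ for $SV_{G_n}$, which is simple by the general theory.

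The main obstacle is sharpness: one must show $SV_{G_n}$ fails to be of type $\F_n$. A subgroup failing $\F_n$ does not in general obstruct the ambient group, and simplicity rules out passing to a quotient, so the realistic route is to push the same Morse function one dimension higher and exhibit an explicit non-vanishing class in $H_{n-1}$ of the descending link at the critical level. This class ought to be a direct translation of the homological obstruction that distinguishes $H_n$ from groups of higher finiteness length, and it should propagate to a nontrivial class in $H_{n-1}(SV_{G_n};\Z[SV_{G_n}])$. Producing this class and verifying that its non-triviality survives the twisting construction is the delicate point: lower bounds on finiteness length always require a genuine homological non-vanishing, not merely a connectivity estimate, and keeping track of the interaction between the ``Brin--Thompson'' and ``Houghton'' directions in the Morse analysis is what makes this step substantially harder than the upper bound.
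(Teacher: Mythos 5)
Your choice of input group ($H_n$ acting on $\{1,\dots,n\}\times\N$) and the argument for the positive direction (oligomorphicity plus Houghton-type stabilizers, giving $SV_{H_n}$ of type $\F_{n-1}$) match the paper exactly. The gap is in your treatment of the negative direction.

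You correctly observe that a subgroup failing $\F_n$ does not in general obstruct the ambient group, and that simplicity rules out quotients — but you then conclude that the only route is a direct Morse-theoretic computation, namely exhibiting a nonvanishing class in $H_{n-1}$ of a descending link and showing it propagates to $H_{n-1}(SV_{H_n};\Z[SV_{H_n}])$. This is not how the paper (or \cite{belkZ}) proceeds, and it would be substantially harder: as you yourself note, keeping track of the interaction between the Brin--Thompson and Houghton directions in a sharp homological non-vanishing argument is delicate, and you leave that step open. The missing idea is that $G$ sits inside $SV_G$ not merely as a subgroup but as a \emph{quasi-retract} (there are coarse Lipschitz maps $G \to SV_G \to G$ composing to something at bounded distance from the identity), and by a theorem of Alonso \cite{alonso94}, quasi-retracts inherit type $\F_n$ from the ambient group. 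So if $SV_{H_n}$ were of type $\F_n$, then $H_n$ would be too, contradicting Brown's computation of the finiteness length of Houghton groups. This sidesteps the direct homological computation entirely. Your proposal therefore has a genuine gap at the sharpness step, even though the construction and the upper bound are right.
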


\medskip

The third main application of twisted Brin--Thompson groups done in \cite{belkZ} was to produce examples of simple groups of type $\F_\infty$ containing every right-angled Artin group. A \emph{right-angled Artin group (RAAG)} is a group definable by a finite presentation in which every defining relator is a commutator of two generators. Subsequently, Salo proved in \cite{salo} that in fact every RAAG already embeds into $2V$, which is a much easier group to deal with than the twisted Brin--Thompson group examples given in \cite{belkZ}. However, the construction in \cite{belkZ} can also be used to produce simple groups with arbitrary finiteness length containing every RAAG:

\medskip

\begin{main:sep_all_raags}
For all $n\in\N$ there exists a simple group of type $\F_{n-1}$ but not type $\F_n$ into which every RAAG embeds as a subgroup.
\end{main:sep_all_raags}

\medskip

This was not explicitly stated in \cite{belkZ}, but follows easily from the results there, as we will explain in Section~\ref{sec:properties}.

\medskip

In the coming sections we recall the construction of the groups $SV_G$, discuss the embedding results from \cite{belkZ}, and the results about finiteness properties. We also prove a strengthening of a result from \cite{belkZ}, which is the main result of this note:

\medskip

\begin{main:main}
Let $G$ be a finitely presented group acting faithfully and oligomorphically on a set $S$. Suppose the stabilizer in $G$ of any finite subset of $S$ is finitely generated. Then $G$ embeds as a subgroup of a finitely presented simple group, namely the twisted Brin--Thompson group $SV_G$.
\end{main:main}

\medskip

Here an action is \emph{oligomorphic} if there are finitely many orbits of $k$-element subsets, for each $k\in\N$. To be precise, this strengthens the $n=2$ case of \cite[Theorem~D]{belkZ} by only requiring the stabilizers to be finitely generated, not necessarily finitely presented. This in particular proves that such a $G$ (and indeed any of its finitely generated subgroups) must have decidable word problem, being as it is embeddable in a finitely presented simple group, a fact also confirmed directly by James Hyde \cite{hydeMO}. We believe that trying to embed a given group into a finitely presented group admitting such an action is ``easier'' than trying to embed it directly into a finitely presented simple group, so this reduction could have applications toward resolving the Boone--Higman Conjecture for certain groups (see Conjecture~\ref{conj:bh}).

\subsection*{Acknowledgments} Thanks are due to Jim Belk for helpful discussions. The author is supported by grant \#635763 from the Simons Foundation.

\section{The construction of the groups}\label{sec:groups}

The easiest way to understand the twisted Brin--Thompson groups $SV_G$ is to focus on the case when $S$ is finite, and extrapolate to infinite $S$ by analogy. Thus, let us first focus on the case when $S$ is finite, and let us also first assume there is no twisting (so, the situation Brin first considered in \cite{brin04}).

\subsection{Finite $S$ case, no twisting}\label{ssec:finite_S_no_twisting}

Let $S=\{1,\dots,s\}$, let $\C=\{0,1\}^\N$ be the Cantor set (with the usual product topology), and let $\C^s$ be the product of $s$ many copies of $\C$. Note that an element of $\C$ is an infinite string $\kappa$ of $0$s and $1$s. Also denote by $\{0,1\}^*$ the set of all finite strings of $0$s and $1$s, including the empty string $\varnothing$. Given an $s$-tuple $\psi=(\psi(1),\dots,\psi(s))$ with $\psi(i)\in\{0,1\}^*$ (we write $\psi(i)$ instead of $\psi_i$ for the sake of future notational clarity), denote by $B(\psi)$ the \emph{cone} on $\psi$ (also called ``dyadic brick'' in \cite{belkZ}), that is, the subset of $\C^s$ given by
\[
B(\psi) \defeq \{(\psi(1)\kappa(1),\dots,\psi(s)\kappa(s))\mid \kappa(1),\dots,\kappa(s)\in \C\} \text{.}
\]
For example, $B(0,1)$ consists of all pairs $(\kappa,\kappa')$ such that $\kappa$ starts with $0$ and $\kappa'$ starts with $1$. Note that the cone $B(\psi)$ is canonically homeomorphic to $\C^s$, via the \emph{canonical homeomorphism} $h_\psi \colon \C^s \to B(\psi)$ defined via
\[
h_\psi \colon (\kappa(1),\dots,\kappa(s))\mapsto (\psi(1)\kappa(1),\dots,\psi(s)\kappa(s))\text{.}
\]

Now let us consider partitions of $\C^s$ into finitely many cones, for example $\{B(0,\varnothing),B(1,0),B(1,1)\}$ is a partition of $\C^2$ into three cones. Suppose we have a partition $\{B(\varphi_1),\dots,B(\varphi_n)\}$ of $\C^s$ into finitely many cones, and another partition $\{B(\psi_1),\dots,B(\psi_n)\}$ of $\C^s$ into the same number of cones. Then we can define a homeomorphism $\C^s\to \C^s$, by sending each $B(\varphi_i)$ to $B(\psi_i)$ via a \emph{prefix replacement}, i.e., by first applying $h_{\varphi_i}^{-1}\colon B(\varphi_i) \to \C^s$ and then $h_{\psi_i}\colon \C^s \to B(\psi_i)$. The reason for the name ``prefix replacement'' is that this sends the element $(\varphi_i(1)\kappa(1),\dots,\varphi_i(s)\kappa(s))$ to $(\psi_i(1)\kappa(1),\dots,\psi_i(s)\kappa(s))$, i.e., it replaces each prefix $\varphi_i(j)$ with $\psi_i(j)$.

\medskip

\begin{definition}[Brin--Thompson group]
The set of all homeomorphisms of this form is the \emph{Brin--Thompson group} $sV$.
\end{definition}

\medskip

It is not obvious from this definition that $sV$ is a group, but indeed any composition of homeomorphisms of this form is again of this form, essentially thanks to the fact that any two partitions into finitely many cones have a common refinement that is also a partition into finitely many cones. (This is why the set is closed under compositions, and it is obviously closed under inverses.)

\medskip

As explained in \cite[Remark~1.1]{belkZ}, it is enough to consider only certain partitions of $\C^s$ into finitely many cones, called \emph{dyadic partitions} there. A dyadic partition is one obtained by successively ``halving'' the copies of $\C^s$, that is, by iteratively replacing some cone $B(\psi(1),\dots,\psi(s))$ with the cones
\begin{align*}
&B(\psi(1),\dots,\psi(i-1),\psi(i)0,\psi(i+1),\dots,\psi(s))\\ \text{ and }&B(\psi(1),\dots,\psi(i-1),\psi(i)1,\psi(i+1),\dots,\psi(s))
\end{align*}
for some $1\le i\le s$. For example, $\{B(0,\varnothing),B(1,0),B(1,1)\}$ is a dyadic partition since it is obtained by first halving $\C^2=B(\varnothing,\varnothing)$ into $B(0,\varnothing)$ and $B(1,\varnothing)$, and then halving $B(1,\varnothing)$ into $B(1,0)$ and $B(1,1)$. It turns out every element of $sV$ is realizable with both its domain and range partitions dyadic in this way.

This halving procedure for constructing the domain and range partitions defining an element of $sV$ is convenient for pictures. Viewing $\C$ visually as an interval and $\C^s$ as an $s$-dimensional cube, an element of $sV$ is obtained by successively halving $s$-cubes in two ways, and then mapping the cones canonically. See Figure~\ref{fig:no_twist} for an example.

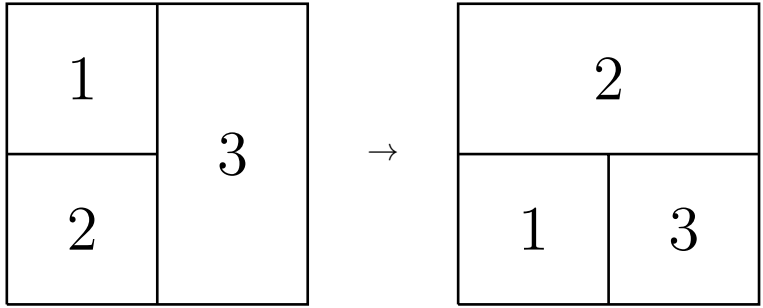
\begin{figure}[htb]\centering
 \begin{tikzpicture}[line width=1pt]
  \draw (0,0) -- (4,0) -- (4,4) -- (0,4) -- (0,0);
	\draw (2,0) -- (2,4)   (0,2) -- (2,2);
	\node[scale=2] at (1,3) {$1$};
	\node[scale=2] at (1,1) {$2$};
	\node[scale=2] at (3,2) {$3$};
	
	\node at (5,2) {$\to$};
	
	\begin{scope}[xshift=6cm]
	\draw (0,0) -- (4,0) -- (4,4) -- (0,4) -- (0,0);
	\draw (2,0) -- (2,2)   (0,2) -- (4,2);
	\node[scale=2] at (1,1) {$1$};
	\node[scale=2] at (2,3) {$2$};
	\node[scale=2] at (3,1) {$3$};
	\end{scope}
 \end{tikzpicture}
\caption{An example of an element of $2V$. We view $\C^2$ as a square, chop it into three cones in two different ways, as shown, and map them to each other via canonical homeomorphisms.}\label{fig:no_twist}
\end{figure}

\subsection{Finite $S$ case, with twisting}\label{ssec:finite_S_twisting}

Now we incorporate twisting. Let $G$ be a (finite) group acting faithfully on $S=\{1,\dots,s\}$. The action of $G$ on $S$ induces an action of $G$ on $\C^s$ by permuting the coordinates. Given $\gamma\in G$, write $\tau_\gamma\colon \C^s\to\C^s$ for the \emph{basic twist homeomorphism} induced by $\gamma$. Our notational convention is, given $\kappa\in\C^s$, the $s$-tuple $\tau_\gamma(\kappa)$ has $i$th entry $\kappa(\gamma^{-1}(i))$, so
\[
\tau_\gamma(\kappa(1),\dots,\kappa(s))\defeq \kappa(\gamma^{-1}(1)),\dots,\kappa(\gamma^{-1}(s)) \text{.}
\]
This way, the $\gamma(i)$th entry of $\tau_\gamma(\kappa)$ is the $i$th entry of $\kappa$, i.e.,
\[
\tau_\gamma(\kappa)(\gamma(i)) = \kappa(i) \text{.}
\]
The main reason for defining twists this way is so that the resulting map $\gamma\mapsto \tau_\gamma$ from $G$ to the group of twists is a homomorphism rather than an antihomomorphism, that is, $\tau_\gamma \tau_{\gamma'} = \tau_{\gamma\gamma'}$ for all $\gamma,\gamma'\in G$.

Now given two cones $B(\varphi)$ and $B(\psi)$ and an element $\gamma\in G$, we can define the \emph{twist homeomorphism} $B(\varphi)\to B(\psi)$ to be the composition $h_\psi \circ \tau_\gamma \circ h_\varphi^{-1}$. Intuitively, this acts as a prefix replacement with a twist in the middle: first the prefixes $\varphi(i)$ are deleted, then the remaining strings are permuted according to $\gamma$, and then new prefixes $\psi(i)$ are added.

We are now ready to define the twisted Brin--Thompson group $sV_G$. Given a partition of $\C^s$ into finitely many cones $B(\varphi_1),\dots,B(\varphi_n)$, another partition into the same number $n$ of cones $B(\psi_1),\dots,B(\psi_n)$, and an $n$-tuple $(\gamma_1,\dots,\gamma_n)$ of elements of $G$, we can define a homeomorphism $\C^s\to \C^s$ by sending each $B(\varphi_i)$ to $B(\psi_i)$ via the twist homeomorphism $h_{\psi_i} \circ \tau_{\gamma_i} \circ h_{\varphi_i}^{-1}$.

\medskip

\begin{definition}[Twisted Brin--Thompson group]
The set of all homeomorphisms of this form is the \emph{twisted Brin--Thompson group} $sV_G$.
\end{definition}

\medskip

Once one sees why $sV$ is a group, it is not hard to see that $sV_G$ is also a group. The key is that, if $B(\varphi)$ is a cone and $B(\varphi')$ is a ``half'' of $B(\varphi)$, so $\varphi'$ is obtained by replacing some $\varphi(i)$ with either $\varphi(i)0$ or $\varphi(i)1$, then the restriction of a twist homeomorphism on $B(\varphi)$ to $B(\varphi')$ equals a twist homeomorphism on $B(\varphi')$.

\medskip

\begin{example}
An easy but useful example to keep in mind is $G=\Z/2\Z$ acting on $S=\{1,2\}$ in the non-trivial way. See Figure~\ref{fig:twist} for an example of an element of $2V_{\Z/2\Z}$.
\end{example}

\medskip

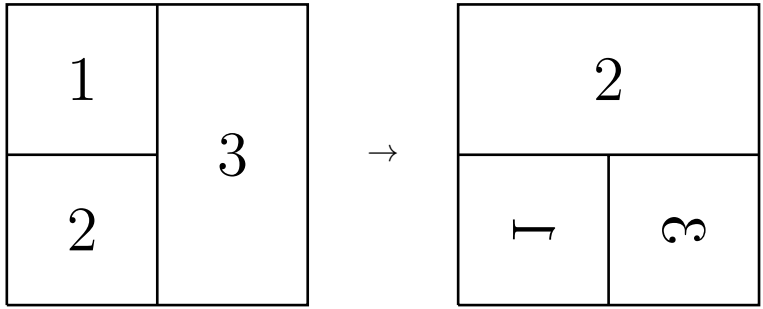
\begin{figure}[htb]\centering
 \begin{tikzpicture}[line width=1pt]
  \draw (0,0) -- (4,0) -- (4,4) -- (0,4) -- (0,0);
	\draw (2,0) -- (2,4)   (0,2) -- (2,2);
	\node[scale=2] at (1,3) {$1$};
	\node[scale=2] at (1,1) {$2$};
	\node[scale=2] at (3,2) {$3$};
	
	\node at (5,2) {$\to$};
	
	\begin{scope}[xshift=6cm]
	\draw (0,0) -- (4,0) -- (4,4) -- (0,4) -- (0,0);
	\draw (2,0) -- (2,2)   (0,2) -- (4,2);
	\node[scale=2,xscale=-1,rotate=90] at (1,1) {$1$};
	\node[scale=2] at (2,3) {$2$};
	\node[scale=2,xscale=-1,rotate=90] at (3,1) {$3$};
	\end{scope}
 \end{tikzpicture}
\caption{An example of an element of $2V_{\Z/2\Z}$, where $\Z/2\Z$ acts on $\{1,2\}$ in the non-trivial way. We view $\C^2$ as a square, chop it into three cones in two different ways, as shown, and map them to each other. The cone labeled $2$ is mapped via a canonical homeomorphism, and the cones labeled $1$ and $3$ are mapped via twist homeomorphisms (as indicated by the labels $1$ and $3$ being reflected).}\label{fig:twist}
\end{figure}

\subsection{Infinite $S$ case}\label{ssec:infinite_S}

Now we generalize to infinite $S$. Playing the role of $\C^s$ is $\C^S$, the space of functions from $S$ to $\C$, with the usual product topology. To represent a collection of prefixes, we take a function $\psi\colon S\to \{0,1\}^*$ with \emph{finite support}, that is, satisfying $\psi(s)=\varnothing$ for all but finitely many $s\in S$. Analogously to the finite $S$ case, we define the \emph{cone} $B(\psi)$ to be the basic open set
\[
\{\kappa\in \C^S\mid \psi(s) \text{ is a prefix of } \kappa(s) \text{ for each } s\in S\} \text{.}
\]
For any $\psi$ we have a \emph{canonical homeomorphism} $h_\psi \colon \C^S \to B(\psi)$ defined by
\[
h_\psi(\kappa)(s) \defeq \psi(s)\kappa(s) \text{.}
\]
As in the finite $S$ case, we can consider homeomorphisms $\C^S\to \C^S$ given by \emph{prefix replacements}, defined as follows. Take a partition of $\C^S$ into finitely many cones $B(\varphi_1),\dots,B(\varphi_n)$, take another partition into the same number of cones $B(\psi_1),\dots,B(\psi_n)$, and send each $B(\varphi_i)$ to $B(\psi_i)$ via $h_{\psi_i}\circ h_{\varphi_i}^{-1}$. These homeomorphisms comprise the \emph{Brin--Thompson group} $SV$.

Now suppose $G$ is a group acting faithfully on $S$, and we will introduce twisting. For $\gamma\in G$, let $\tau_\gamma \colon \C^S \to \C^S$ be the \emph{basic twist homeomorphism} defined by
\[
\tau_\gamma(\kappa)(\gamma(s)) \defeq \kappa(s) \text{.}
\]
Given cones $B(\varphi)$ and $B(\psi)$ we get a \emph{twist homeomorphism} $B(\varphi)\to B(\psi)$ given by $h_\psi\circ \tau_\gamma \circ h_\varphi^{-1}$. More generally, given a partition of $\C^S$ into finitely many cones $B(\varphi_1),\dots,B(\varphi_n)$, another such partition $B(\psi_1),\dots,B(\psi_n)$, and an $n$-tuple $(\gamma_1,\dots,\gamma_n)$ of elements of $G$, we can define a homeomorphism $\C^S\to\C^S$ by sending $B(\varphi_i)$ to $B(\psi_i)$ via the twist homeomorphism $h_{\psi_i}\circ \tau_{\gamma_i} \circ h_{\varphi_i}^{-1}$. These homeomorphisms comprise the \emph{twisted Brin--Thompson group} $SV_G$.

\medskip

\begin{example}\label{ex:G_act_self}
An important example to keep in mind is $S=G$ with the action given by left translation. In this case we get a twisted Brin--Thompson group denoted $GV_G$.
\end{example}

\medskip

Just to emphasize, in Example~\ref{ex:G_act_self}, $G$ can be any group whatsoever. In particular, every group embeds into a twisted Brin--Thompson group.

Perhaps the most important general property of twisted Brin--Thompson groups is the following:

\medskip

\begin{theorem*}\cite[Theorem~3.4]{belkZ}
For any $S$ and $G$, the group $SV_G$ is simple.
\end{theorem*}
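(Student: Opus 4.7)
The plan is to follow the classical two-step strategy for simplicity of Thompson-like groups: show (a) that $SV_G$ is perfect, i.e.\ $[SV_G,SV_G]=SV_G$, and (b) that the normal closure of any nontrivial element exhausts $SV_G$. Combined these give simplicity, since any nontrivial normal subgroup $N\trianglelefteq SV_G$ must then equal all of $SV_G$.

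For (a), every element of $SV_G$ is a pair of cone partitions decorated with twists, and by halving one reduces to checking that each ``elementary'' $\sigma\in SV_G$ supported on a union $B_1\cup B_2$ of two cones is a commutator. The classical trick is to pick a disjoint third cone $B_3$ together with an element $t\in SV_G$ cyclically permuting $B_1,B_2,B_3$ with appropriate twist data; then a direct calculation yields $\sigma=[t,\sigma']$ for a suitable $\sigma'$ supported in $B_1\cup B_2\cup B_3$. Since twist data composes functorially along cones, this extends the classical $V$-argument uniformly to the twisted setting.

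For (b), I would first establish the \emph{displacement lemma}: any nontrivial $g\in SV_G$ moves some cone entirely off itself. Indeed $g\ne\id$ produces an $x\in\C^S$ with $g(x)\ne x$, and because cones form a basis for the topology of $\C^S$ there is a cone $B\ni x$ with $B\cap g(B)=\varnothing$. Now let $N\trianglelefteq SV_G$ be nontrivial and fix such a $g\in N\setminus\{1\}$ with corresponding $B$. For any $h\in SV_G$ acting as the identity off $B$, the conjugate $ghg^{-1}$ is the identity off $g(B)$, so $h$ and $ghg^{-1}$ have disjoint supports and commute; hence $[g,h]=(ghg^{-1})h^{-1}\in N$ is a disjoint juxtaposition. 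An additional conjugation by $f\in SV_G$ that fixes $g(B)$ pointwise but moves $B$ to a fresh disjoint cone $B'$ then yields an element of the form $h^{-1}\cdot fhf^{-1}\in N$; iterating this maneuver and varying $f$ and $h$ deposits every cone-supported element into $N$. Finally, using that such cone-supported elements generate $SV_G$ forces $N=SV_G$.

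The main obstacle I anticipate is the fragmentation step: showing that elements supported on proper cones generate $SV_G$. The ``$V$-part'' of any element fragments trivially by partition refinement, but a global basic twist $\tau_\gamma$ acts on all of $\C^S$ at once, so writing it as a product of cone-supported elements is delicate. For $G$ finite (or $\gamma$ of finite order) one can iteratively refine a partition until $\tau_\gamma$ permutes its cones among themselves in pairs, each pair handled by a cone-supported twist. For $\gamma$ of infinite order a more surgical approach seems necessary---possibly realizing $\tau_\gamma$ as a commutator of two carefully chosen cone-supported elements that transport half of $\C^S$ through $\gamma$ and compensate on the other half. Checking these algebraic identities and the compatibility of twist data across partition boundaries is the combinatorial heart of the argument; everything else is essentially the familiar story for untwisted $V$.
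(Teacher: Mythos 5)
Your two-step plan (perfectness plus a Higman/Epstein normal-closure argument driven by a displacement lemma) is the standard architecture for simplicity of Thompson-type groups and is the right skeleton here; the displacement lemma is correct, and so is the commutator juggling in (b). You also correctly identify the one genuinely new obstacle in the twisted setting: fragmenting a global basic twist $\tau_\gamma$ into elements with support inside a proper cone. Both of your steps secretly lean on this (your step (a) already assumes everything reduces to ``elementary'' pieces on unions of two cones), so the argument does not close until this lemma is supplied. Unfortunately, the resolution you sketch does not work. The case split on the order of $\gamma$ is a red herring, and the ``refine until $\tau_\gamma$ pairs up cones'' idea fails even in the finite-order case: after refinement you obtain swaps of \emph{pairs} of disjoint cones, and in $\C^S$ a union of two disjoint cones is almost never contained in a proper cone (e.g.\ $B(0,\varnothing)\cup B(1,\varnothing)=\C^2$), so further refining just produces more swaps of the same shape and never terminates in cone-supported factors. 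For infinite order you only gesture at a ``more surgical'' commutator without producing one.

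The missing idea is to distinguish the \emph{global} twist $\tau_\gamma$ from a \emph{local} twist. For a prefix tuple $\varphi\neq\varnothing$ and $\gamma\in G$, the map $\lambda_{\varphi,\gamma}$ defined to be $h_\varphi\circ\tau_\gamma\circ h_\varphi^{-1}$ on $B(\varphi)$ and the identity off $B(\varphi)$ is a genuine element of $SV_G$: it permutes the tails inside $B(\varphi)$ by $\gamma$ while leaving the prefixes $\varphi$ in place, hence it maps $B(\varphi)$ to itself and is supported inside the proper cone $B(\varphi)$. If $\{B(\varphi_1),B(\varphi_2)\}$ is any partition of $\C^S$ into two proper cones, a direct computation (using $\tau_\gamma\circ h_\varphi = h_{\tau_\gamma\varphi}\circ\tau_\gamma$) shows
\[
\tau_\gamma \;=\; p\cdot\lambda_{\varphi_1,\gamma}\,\lambda_{\varphi_2,\gamma},
\]
where $p\in SV$ is the untwisted prefix replacement sending each $B(\varphi_i)$ to $B(\tau_\gamma\varphi_i)$; the factor $p$ absorbs the fact that $\gamma$ may move the coordinates on which the prefixes sit. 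This fragments $\tau_\gamma$ for every $\gamma$ at once, regardless of order, into two local twists supported in proper cones plus an element of the classical untwisted group $SV$, which is handled by the already-known theory for $sV$. With this lemma in hand, your steps (a) and (b) -- together with the standard three-cone/telescoping trick you allude to for exhibiting a cone-supported element as a commutator -- do go through, and this is essentially the route the paper takes.
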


\section{Embedding and finiteness properties}\label{sec:properties}

Note that $G$ embeds as a subgroup of $SV_G$, namely the subgroup of all basic twist homeomorphisms $\tau_\gamma$. Since $SV_G$ is simple, this provides a nice, natural way of embedding an arbitrary group into a simple group. Now one can begin asking whether any desirable properties of $G$ are inherited by $SV_G$.

In geometric group theory, perhaps the most desirable basic property a group could have is to be finitely generated. It is not too difficult to relate the finite generation of $G$ to that of $SV_G$:

\medskip

\begin{theorem*}\cite[Theorem~A]{belkZ}
The group $SV_G$ is finitely generated if and only if $G$ is finitely generated and the action of $G$ on $S$ has finitely many orbits.
\end{theorem*}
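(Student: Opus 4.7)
My plan is to prove the two directions separately; the forward direction is the substantive one, and the converse is by a direct construction.

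For the forward direction, suppose $SV_G = \langle f_1,\dots,f_m\rangle$. I would extract the two conclusions from two natural invariants of elements of $SV_G$. The first is the \emph{germ in $G$}: at any $\kappa \in \C^S$ lying in the interior of a cone of some chosen description of $h$, the map $h$ is locally a twist homeomorphism with some $\gamma = \gamma(h,\kappa) \in G$. Germs compose under composition of homeomorphisms, so letting $\Gamma$ denote the finite set of twists appearing in cone descriptions of $f_1,\dots,f_m$, every germ of every element of $\langle f_1,\dots,f_m\rangle$ lies in $\langle \Gamma \rangle$. Since the basic twist $\tau_\gamma$ has germ $\gamma$ at every point, this forces $G = \langle \Gamma \rangle$, so $G$ is finitely generated. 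The second invariant is the \emph{prefix support} $T(h) \defeq \bigcup_i \bigl(\Supp(\varphi_i) \cup \Supp(\psi_i)\bigr)$ of a cone description of $h$. Setting $T = \bigcup_j T(f_j)$, I would prove inductively on word length that every $h \in \langle f_1,\dots,f_m\rangle$ admits a cone description with prefix support contained in $\langle \Gamma \rangle \cdot T$. For each $s \in S$, the bit-flip homeomorphism $\sigma_s \in SV \subseteq SV_G$ that swaps the two halves of coordinate $s$ (with no twist) must be expressible in the generators, and any cone description of $\sigma_s$ must distinguish the two halves of coordinate $s$, so $s$ lies in its prefix support. Hence $s \in \langle \Gamma \rangle \cdot T = G \cdot T$ for every $s$, which forces $S$ to consist of only finitely many $G$-orbits.

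For the converse, assume $G$ is generated by $\gamma_1,\dots,\gamma_k$ and $S$ has orbit representatives $s_1,\dots,s_r$. I would propose a finite generating set for $SV_G$ consisting of the basic twists $\tau_{\gamma_1},\dots,\tau_{\gamma_k}$ together with a finite family of Brin--Thompson-style prefix-replacement moves on the coordinates in $\{s_1,\dots,s_r\}$ (for instance, a finite generating set for the finite-dimensional group $rV$, which exists by \cite{hennig12}, supplemented by enough pairwise moves to realize swaps between distinct coordinates in the same $G$-orbit). Conjugating these by products of basic twists realizes analogous moves on arbitrary finite subsets of $S$, and together with the twists themselves this is enough to decompose any given element of $SV_G$ into the chosen generators.

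The main obstacle is the inductive bookkeeping for the prefix support in the forward direction. Composing two twist homeomorphisms requires passing to a common refinement of the range partition of one with the domain partition of the other, and the new cones of the composition carry prefixes that combine the outer element's range prefixes with twist-translates of the inner element's domain prefixes. Verifying carefully that this combined growth stays inside $\langle \Gamma \rangle \cdot T$ is the technical heart of the argument; once established, both conclusions drop out cleanly.
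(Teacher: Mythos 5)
The paper you are working from does not itself prove Theorem A; it cites \cite{belkZ}, so there is no in-paper proof to compare against. Assessing your proposal on its own terms: the forward direction is sound and, as far as I can tell, close in spirit to the argument in \cite{belkZ}. The germ-in-$G$ invariant is well defined (if $h$ restricted to a cone $B(\varphi)$ can be written both as $h_{\psi}\tau_{\gamma}h_{\varphi}^{-1}$ and as $h_{\psi'}\tau_{\gamma'}h_{\varphi}^{-1}$, then $\tau_{\gamma\gamma'^{-1}}$ would be a prefix replacement, forcing $\gamma=\gamma'$ by faithfulness), germs multiply under composition, and $\tau_\gamma$ has germ $\gamma$ everywhere, so $G=\langle\Gamma\rangle$. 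The prefix-support induction and the observation that $\sigma_s$ must carry $s$ in its prefix support are also correct; together they pin $S$ inside $G\cdot T$ for a finite $T$, giving finitely many orbits.

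The converse direction, which you dismiss as ``a direct construction,'' is actually where your proposal has a genuine gap: you never argue that your candidate set generates. Two nontrivial steps are left implicit. First, conjugating your $rV$-style generators by basic twists only produces copies of $V_{\gamma(T_0)}$ for $\gamma\in G$, and these translates can be pairwise disjoint (take $S=\Z$, $G=2\Z$ acting by translation, $T_0=\{0,1\}$: the translates $\{2k,2k+1\}$ are disjoint, so the groups $V_{\gamma(T_0)}$ merely generate a restricted direct product, far from $SV$). To fix this you need ``merge'' elements connecting coordinates across translates, and then a lemma of the shape $\langle V_T, V_{T'}, \text{(swap between $T$ and $T'$)}\rangle\supseteq V_{T\cup T'}$; you gesture at ``enough pairwise moves'' but neither specify them nor prove the merge lemma. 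Second, even granting that you obtain all of $SV$, it is not automatic that $SV$ together with the basic twists $\tau_\gamma$ generates $SV_G$: a general element carries a different twist $\gamma_i$ on each domain cone, and the naive factorization ``prefix replacement composed with a twist'' fails because the intermediate cones $B(\gamma_i\cdot\varphi_i)$ need not form a partition of $\C^S$. Producing localized twists from global ones and showing they suffice is precisely the content that a proof of the converse must supply, and it is absent here.
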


\medskip

For example if one uses $S=G$ as in Example~\ref{ex:G_act_self}, then $GV_G$ is finitely generated if and only if $G$ is. In particular this recovers the classical result of Hall \cite{hall74} that every finitely generated group embeds as a subgroup of a finitely generated group.

Let us now discuss the relationship between the geometry of $G$ and $SV_G$, in the case when they are both finitely generated. First we need to recall some background on basic geometric group theory.

Recall that given any finitely generated group $\Gamma$ with a fixed finite generating set $A$, we get a \emph{word metric} on $\Gamma$ by declaring that the distance between $g$ and $h$ is the length of the shortest word in the generators and their inverses representing $g^{-1}h$. It is an easy exercise to verify that this is a metric, and since $A$ is finite, $\Gamma$ is even a proper metric space. For metric spaces $X$ and $Y$ with metrics $d_X$ and $d_Y$ respectively, a function $f\colon X\to Y$ is called \emph{coarse Lipschitz} if there exist constants $C,D>0$ such that 
\[
d_Y(f(x),f(x')) \le Cd_X(x,x')+D
\]
for all $x,x'\in X$. It is called a \emph{quasi-isometric embedding} if additionally
\[
\frac{1}{C}d_X(x,x') - D \le d_Y(f(x),f(x'))
\]
for all $x,x'\in X$.

Given a finitely generated group $\Gamma$ and a finitely generated subgroup $\Gamma'$, one can ask whether the inclusion map $\Gamma'\to \Gamma$ is a quasi-isometric embedding, with respect to some word metrics. In this case we say that $\Gamma'$ \emph{quasi-isometrically embeds} as a subgroup of $\Gamma$. Since the inclusion is a homomorphism it will always be coarse Lipschitz, but if it is even a quasi-isometric embedding then, intuitively, points that are far apart in $\Gamma'$ do not become substantially closer when viewed in the ambient group $\Gamma$. Thus, if $\Gamma'$ quasi-isometrically embeds as a subgroup of $\Gamma$, then in some sense one should view $\Gamma'$ as not only an algebraic sub-object of $\Gamma$, but additionally as a geometric sub-object.

Now we can state the main application of twisted Brin--Thompson groups:

\medskip

\begin{theorem*}\cite[Theorems~B and~C]{belkZ}
Let $G$ and $SV_G$ be finitely generated. Then the inclusion map $G\to SV_G$ is a quasi-isometric embedding. Hence every finitely generated group $G$ quasi-isometrically embeds as a subgroup of some finitely generated simple group, e.g., $GV_G$.
\end{theorem*}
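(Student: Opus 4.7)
The plan is to split the theorem into an easy coarse Lipschitz direction and a less obvious lower bound, and then to derive the ``Hence'' by applying the first assertion to $S = G$ with the left-translation action. For the upper bound I would fix a finite generating set of $SV_G$, which by Theorem~A exists, and enlarge it (if necessary) to contain the basic twists $\tau_{\gamma_1},\dots,\tau_{\gamma_k}$ corresponding to a finite generating set $\{\gamma_1,\dots,\gamma_k\}$ of $G$. The homomorphism property $\tau_{\gamma}\tau_{\gamma'} = \tau_{\gamma\gamma'}$ then yields $\|\tau_\gamma\|_{SV_G} \leq \|\gamma\|_G$, so the inclusion is automatically coarse Lipschitz.

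The heart of the proof is the lower bound $\|\gamma\|_G \leq M\|\tau_\gamma\|_{SV_G}$. I would introduce a \emph{twist complexity} function $\nu\colon SV_G \to \N$ defined as follows: for $f\in SV_G$, pick any representation of $f$ as a pair of partitions $\{B(\varphi_i)\}_{i=1}^n$, $\{B(\psi_i)\}_{i=1}^n$ together with labels $(\gamma_1,\dots,\gamma_n)\in G^n$, and set $\nu(f) \defeq \max_i \|\gamma_i\|_G$, where $\|\cdot\|_G$ denotes word length in $G$ with respect to the fixed finite generating set. To verify that $\nu$ is well-defined, I would check that halving a domain piece $B(\varphi_i)$ at coordinate $j$ forces a matching halving of the range piece $B(\psi_i)$ at coordinate $\gamma_i(j)$, while the label $\gamma_i$ is inherited unchanged by both sub-pieces. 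Since any two representations of the same $f$ admit a common refinement and refinement merely replicates labels, the maximum is an invariant of $f$.

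Next I would establish three key properties of $\nu$: (a) $\nu(\tau_\gamma) = \|\gamma\|_G$, immediate from the one-piece trivial representation; (b) subadditivity $\nu(fg) \leq \nu(f) + \nu(g)$, obtained by refining so that the range partition of $g$ matches the domain partition of $f$ and invoking the composition identity $(h_\psi \circ \tau_\gamma \circ h_\varphi^{-1}) \circ (h_\varphi \circ \tau_\delta \circ h_\chi^{-1}) = h_\psi \circ \tau_{\gamma\delta} \circ h_\chi^{-1}$ together with $\|\gamma\delta\|_G \leq \|\gamma\|_G + \|\delta\|_G$; and (c) a uniform bound $\nu(a) \leq M$ over the finitely many generators $a$ of $SV_G$. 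Writing $\tau_\gamma$ as a word $a_1\cdots a_\ell$ of length $\ell = \|\tau_\gamma\|_{SV_G}$, these yield $\|\gamma\|_G = \nu(\tau_\gamma) \leq \sum_i \nu(a_i) \leq M\ell$, which is the desired lower bound. The ``Hence'' then follows by taking $S = G$ with the left-translation action: this action is faithful and transitive, so by Theorem~A the group $GV_G$ is finitely generated, by the previously cited simplicity theorem it is simple, and the first part applies.

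The main obstacle I anticipate is rigorously verifying the subadditivity property (b): one must carefully take a common refinement of the pertinent partitions, track how twist labels transport under refinement (the permutation of coordinates by each $\gamma_i$ makes this more subtle than the untwisted Brin--Thompson case), and confirm that composing two twist homeomorphisms on matching cones produces a single twist with group-theoretic label equal to the product. Once this refinement bookkeeping is set up, the subadditivity and the rest of the argument follow cleanly.
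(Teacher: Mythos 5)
Your argument is correct and is essentially the route taken in \cite{belkZ} (this note only cites Theorems~B and~C there, so the comparison is with that paper): well-definedness of local twist labels via faithfulness, their boundedness on a finite generating set of $SV_G$, and controlled behavior under composition are exactly the ingredients of the original proof, with your max-over-pieces function $\nu$ playing the role of the twist-germ/quasi-retraction data used there. The refinement bookkeeping you flag (labels inherited unchanged under halving, and multiplying as $\gamma\delta$ on matched cones) goes through just as you describe, so there is no gap.
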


\medskip

Now we turn our attention to higher finiteness properties. An obvious question is, if $G$ is of type $\F_n$, what conditions on the action of $G$ on $S$ can ensure that $SV_G$ is also of type $\F_n$?

The main result in this direction in \cite{belkZ} involves oligomorphic actions:

\medskip

\begin{definition}[Oligomorphic]
An action of a group $G$ on a set $S$ is \emph{oligomorphic} if for each $k\ge 1$ there are finitely many $G$-orbits of $k$-element subsets of $S$.
\end{definition}

\medskip

\begin{theorem*}\cite[Theorem~D]{belkZ}
Suppose the action of $G$ on $S$ is oligomorphic, that $G$ is of type $\F_n$, and that for any finite subset $T\subseteq S$ the stabilizer in $G$ of $T$ is of type $\F_n$. Then $SV_G$ is of type $\F_n$.
\end{theorem*}

\medskip

Note that it is actually redundant to specify that $G$ is of type $\F_n$ here, since $G$ equals the stabilizer in $G$ of the finite subset $\emptyset$, and hence the condition on the stabilizers ensures $G$ is of type $\F_n$. For the sake of emphasizing that $G$ itself should be of type $\F_n$ though, we phrase it this way.

The $n=2$ case is worth spelling out:

\medskip

\begin{corollary}\label{cor:olig_fp}
Suppose the action of $G$ on $S$ is oligomorphic, that $G$ is finitely presented, and that for any finite subset $T\subseteq S$ the stabilizer in $G$ of $T$ is finitely presented. Then $SV_G$ is finitely presented.\qed
\end{corollary}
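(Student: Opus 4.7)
The plan is to apply the preceding Theorem~D with $n=2$ and then translate between type $\F_2$ and finite presentability. This translation is the classical fact that a group is finitely presented if and only if it is of type $\F_2$: a finite presentation gives a classifying space with finite $2$-skeleton by attaching $2$-cells along the relators to the one-vertex graph built from the generators, and conversely a classifying space with finite $2$-skeleton yields a finite presentation by reading off the attaching data. Under this dictionary the hypothesis that $G$ is finitely presented becomes the hypothesis that $G$ is of type $\F_2$, and similarly the hypothesis that every stabilizer of a finite subset $T\subseteq S$ is finitely presented becomes the hypothesis that every such stabilizer is of type $\F_2$.

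With this translation in hand, the hypotheses of Corollary~\ref{cor:olig_fp} are literally the $n=2$ hypotheses of Theorem~D: oligomorphic action, $G$ of type $\F_2$, and stabilizers of finite subsets of type $\F_2$. Theorem~D then delivers that $SV_G$ is of type $\F_2$, and applying the same dictionary in reverse yields that $SV_G$ is finitely presented. No separate argument is required; the corollary is isolated purely because finite presentability is the most important special case and is worth stating explicitly for readers who have not internalized the $\F_n$ formalism.

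Consequently there is no real obstacle to overcome in this proof, and I would not expect to spend any effort beyond this one-line reduction. The genuine content lies entirely in Theorem~D and in the translation between homotopical finiteness properties and classical presentation-theoretic ones, the latter of which I would treat as a black box. (By contrast, the \emph{strengthening} of this corollary recorded as Theorem~\ref{thrm:good_action_fp}, which weakens the hypothesis on stabilizers from finitely presented to merely finitely generated, is a substantive statement whose proof is not a formal consequence of Theorem~D and which must be argued separately.)
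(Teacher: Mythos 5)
Your proposal is correct and is exactly the paper's reasoning: the corollary carries a \qed in its statement precisely because it is nothing more than the $n=2$ case of Theorem~D together with the standard identification of type $\F_2$ with finite presentability. Your closing remark that the genuine content of the section lies in Theorem~\ref{thrm:good_action_fp}, which weakens the stabilizer hypothesis to finite generation and requires a real argument, is also accurate.
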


\medskip

In Section~\ref{sec:improved_fin_pres} we will improve this result, by only requiring the stabilizers of non-empty finite subsets to be finitely generated.

First let us discuss some of the consequences of the above theorem. Since $G$ quasi-isometrically embeds as a subgroup of $SV_G$, we see that any group $G$ acting (faithfully) oligomorphically on a set with type $\F_n$ stabilizers of finite subsets quasi-isometrically embeds as a subgroup of a simple group of type $\F_n$. This is already significant in the $n=2$ case, since any subgroup of a finitely presented simple group has decidable word problem (more on this in Section~\ref{sec:improved_fin_pres}). There are some concrete examples discussed in \cite{belkZ} that we recall here, and we refer the reader to \cite{belkZ} for more details.

\medskip

\begin{corollary*}\cite[Corollary~E]{belkZ}
Let $S=(0,1)\cap\Z[\frac{1}{2}]$ and consider the standard action of Thompson's group $F$ on $S$. This action is oligomorphic, and the stabilizer of any finite subset of $S$ is of type $\F_\infty$, so the twisted Brin--Thompson group $SV_F$ is of type $\F_\infty$.
\end{corollary*}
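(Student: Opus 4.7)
The plan is to verify the three hypotheses of the oligomorphicity theorem (\cite[Theorem~D]{belkZ}) with $n=\infty$, $G=F$, and $S=(0,1)\cap\Z[\frac{1}{2}]$, and then invoke that theorem directly. So I would check: (a) the action of $F$ on $S$ is oligomorphic; (b) for any finite $T\subseteq S$, $\Stab_F(T)$ is of type $\F_\infty$; and (c) $F$ itself is of type $\F_\infty$ (which follows from (b) with $T=\emptyset$, and is in any case the classical Brown--Geoghegan result).

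For (a), since every element of $F$ is an orientation-preserving homeomorphism of $[0,1]$, a $k$-element subset of $S$ is the same data as a strictly increasing $k$-tuple in $S$. It therefore suffices to show that $F$ acts transitively on strictly increasing $k$-tuples in $S$. This is the standard ``dyadic transitivity'' of $F$: given $(a_1<\cdots<a_k)$ and $(b_1<\cdots<b_k)$ in $S$, one refines the standard dyadic subdivisions of $[0,1]$ underlying these two tuples to obtain two tree-pair diagrams with the same number of leaves, in which $a_i$ and $b_i$ appear at the corresponding leaf-boundary, and the induced tree-pair element of $F$ sends $a_i$ to $b_i$. Transitivity on tuples trivially implies transitivity on unordered subsets, so in fact there is exactly one orbit on $k$-subsets for every $k$, and the action is oligomorphic.

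For (b), the setwise and pointwise stabilizers of $T=\{a_1<\cdots<a_k\}$ coincide (again because $F$ preserves the order on $[0,1]$), so an element $f\in\Stab_F(T)$ is determined independently by its restrictions to the $k+1$ subintervals $[0,a_1],[a_1,a_2],\dots,[a_k,1]$. Each such subinterval has dyadic endpoints, and its group of dyadic piecewise-linear self-homeomorphisms is canonically isomorphic to $F$ (one fixes a dyadic PL identification with $[0,1]$); conversely any $(k+1)$-tuple of such elements assembles, by extending each one by the identity, into an element of $F$ fixing $T$ pointwise. Hence $\Stab_F(T)\cong F^{k+1}$, which is of type $\F_\infty$ as a finite direct power of an $\F_\infty$ group. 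Plugging this into the oligomorphicity theorem with $n=\infty$ gives that $SV_F$ is of type $\F_\infty$, completing the proof.

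The one step I would be most careful about is the identification of the stabilizer with $F^{k+1}$: it relies on the (well-known but nontrivial) fact that for any pair of dyadic rationals $0\le a<b\le 1$ the group of dyadic PL homeomorphisms of $[a,b]$ is isomorphic to $F$, which in turn uses that any two dyadic subintervals of $[0,1]$ admit a dyadic PL bijection between them. Everything else is a routine repackaging of well-known transitivity and tree-pair properties of Thompson's group $F$.
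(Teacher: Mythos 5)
Your proposal is correct and matches the argument given for Corollary~E in the cited paper of Belk and the author: oligomorphicity via transitivity of $F$ on increasing $k$-tuples of dyadics, identification of the (setwise $=$ pointwise) stabilizer of a $k$-element set with $F^{k+1}$ by decomposing $[0,1]$ into the $k+1$ subintervals with dyadic endpoints, and then applying Theorem~D for every $n$ to conclude type $\F_\infty$. The one small phrasing you should tighten is ``apply Theorem~D with $n=\infty$'': since the theorem is stated for finite $n$, you actually apply it for each $n\in\N$ (noting $F^{k+1}$ is of type $\F_n$ for all $n$) and then invoke the definition of type $\F_\infty$.
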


\medskip

\begin{corollary*}\cite[Corollary~G]{belkZ}
Let $S=\{1,\dots,n\}\times\N$ and consider the standard action of Houghton's group $H_n$ on $S$. This action is oligomorphic, and the stabilizer of any finite subset of $S$ is of type $\F_{n-1}$, so the twisted Brin--Thompson group $SV_{H_n}$ is of type $\F_{n-1}$.
\end{corollary*}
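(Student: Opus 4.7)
The plan is to verify the two hypotheses of the preceding Theorem D (oligomorphicity of the action, and type $\F_{n-1}$ stabilizers of finite subsets of $S$) and then invoke that theorem directly; the conclusion about $SV_{H_n}$ follows at once.

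For oligomorphicity, I associate to each finite subset $T \subseteq S$ its \emph{ray profile} $(a_1, \ldots, a_n) \in \N^n$ with $a_i = |T \cap (\{i\} \times \N)|$. Every element of $H_n$ preserves the decomposition of $S$ into rays (each ray is mapped to itself, since an eventual translation on a given ray cannot map it anywhere else), so subsets with distinct profiles lie in distinct $H_n$-orbits. Conversely, given two $k$-element subsets with the same profile, a finitary permutation within each ray (such permutations lie in $H_n$ with trivial translation amounts) carries one to the other. Hence the number of orbits of $k$-element subsets equals the number of compositions of $k$ into $n$ non-negative parts, namely $\binom{n+k-1}{k}$, which is finite for each $k$.

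For the stabilizer condition, Brown's classical theorem gives that $H_n$ itself is of type $\F_{n-1}$, so it suffices to identify the pointwise stabilizer $\Fix_{H_n}(T)$ with $H_n$; the setwise stabilizer is then a finite extension via the natural map $\Stab_{H_n}(T) \to \Symm(T)$ and hence also of type $\F_{n-1}$. To exhibit such an identification, for each ray $i$ let $T_i = \{k \in \N : (i,k) \in T\}$ and take the unique order-preserving bijection $\phi_i \colon \N \setminus T_i \to \N$, assembled into $\phi \colon S \setminus T \to S$. A direct count shows $\phi_i(k) = k - |T_i|$ for all $k > \max T_i$, so each $\phi_i$ is eventually a pure translation. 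Now send $g \in H_n$ to $\tilde g \in \Symm(S)$ defined by $\tilde g|_T = \id_T$ and $\tilde g|_{S \setminus T} = \phi^{-1} \circ g \circ \phi$. On ray $i$ and for $k$ sufficiently large the shifts $-|T_i|$ and $+|T_i|$ introduced by $\phi$ and $\phi^{-1}$ cancel, so $\tilde g$ eventually translates by the same amount $m_i$ as $g$; in particular $\tilde g \in H_n$. This assignment is a group homomorphism with two-sided inverse $f \mapsto \phi \circ f|_{S \setminus T} \circ \phi^{-1}$ defined on $\Fix_{H_n}(T)$, establishing $\Fix_{H_n}(T) \cong H_n$. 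The one delicate observation—that $\phi_i$, while not eventually the identity, is still an eventual translation whose shift cancels under conjugation and thus preserves the translation-at-infinity structure defining $H_n$—is the crux of the argument; with both hypotheses verified, Theorem D applied to $G = H_n$ yields that $SV_{H_n}$ is of type $\F_{n-1}$.
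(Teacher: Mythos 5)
Your overall route is the intended one: verify the two hypotheses of the oligomorphic-action theorem for $G=H_n$ and apply it, using Brown's theorem that $H_n$ is of type $\F_{n-1}$ and the identification of the pointwise stabilizer of a finite set with $H_n$ itself (with the setwise stabilizer a finite extension via the map to $\Symm(T)$). That stabilizer argument is correct and is essentially the standard one.

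However, your oligomorphicity paragraph contains a genuinely false claim about $H_n$: elements of Houghton's group do \emph{not} preserve the decomposition of $S$ into rays. The defining condition is only that $g(i,k)=(i,k+m_i)$ \emph{outside some finite set}; on that finite exceptional set the permutation is arbitrary, and in particular $H_n$ contains the full finitary symmetric group on $S$ (e.g.\ the transposition swapping $(1,1)$ and $(2,1)$), which mixes rays freely. Consequently, subsets with distinct ray profiles can (and do) lie in the same orbit, and your count of $\binom{n+k-1}{k}$ orbits of $k$-element subsets is wrong: since the finitary symmetric group already acts transitively on $k$-element subsets, there is exactly one orbit for each $k$. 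This error does not sink the proof of the statement, because oligomorphicity only requires \emph{finitely many} orbits, and the correct half of your argument (same profile implies same orbit, via finitary permutations) already bounds the number of orbits by the number of profiles; but as written the claim that "each ray is mapped to itself" misrepresents $H_n$ and should be removed, with transitivity of $H_n$ on $k$-element subsets (via its finitary symmetric subgroup) cited instead. The remainder — $\Fix_{H_n}(T)\cong H_n$ by deleting $T$ and reindexing each ray by the order-preserving bijection, finite extension to the setwise stabilizer, Brown's theorem, and then the theorem on oligomorphic actions applied with finiteness level $n-1$ — is sound.
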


\medskip

There are also results in \cite{belkZ} showing that certain twisted Brin--Thompson groups are not of type $\F_n$. The key is that when $G$ and $SV_G$ are finitely generated, $G$ is a \emph{quasi-retract} of $SV_G$, which by a theorem of Alonso \cite{alonso94} implies that if $SV_G$ is of type $\F_n$ then so is $G$. See \cite{belkZ} for details. The important consequence of this is the following complete version of the above:

\medskip

\begin{corollary*}\cite[Corollary~G]{belkZ}
The twisted Brin--Thompson group $SV_{H_n}$ above, using Houghton's group $H_n$, is of type $\F_{n-1}$ but not type $\F_n$.
\end{corollary*}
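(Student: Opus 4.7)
The plan is to combine two ingredients that have already been set up in the excerpt, so the proof is really just a matter of assembling them correctly. The positive direction, that $SV_{H_n}$ is of type $\F_{n-1}$, is already the content of the preceding Corollary~G statement: the standard action of $H_n$ on $\{1,\dots,n\}\times\N$ is oligomorphic, and the stabilizer of any finite subset is commensurable with $H_n$ itself (or with a stabilizer of the form $H_{n-k}\times \text{(finite)}$), hence of type $\F_{n-1}$ by Brown's theorem on Houghton's groups, so the type $\F_n$ theorem from \cite{belkZ} gives type $\F_{n-1}$ for $SV_{H_n}$. So all that remains to prove is the negative half: that $SV_{H_n}$ is not of type $\F_n$.

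For the negative direction I would argue by contradiction. First, observe that both $H_n$ and $SV_{H_n}$ are finitely generated: $H_n$ is classically finitely generated, and the action of $H_n$ on $\{1,\dots,n\}\times\N$ has exactly $n$ orbits, so finite generation of $SV_{H_n}$ follows from Theorem~A of \cite{belkZ} quoted in the excerpt. Next, invoke the quasi-retract fact recalled just before the statement: whenever $G$ and $SV_G$ are both finitely generated, $G$ is a quasi-retract of $SV_G$. Applying this to $G = H_n$ makes $H_n$ a quasi-retract of $SV_{H_n}$.

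Now appeal to Alonso's theorem \cite{alonso94}: quasi-retracts inherit type $\F_n$ from their ambient group. Consequently, if $SV_{H_n}$ were of type $\F_n$, then $H_n$ would be of type $\F_n$ as well. But Brown showed that $H_n$ is of type $\F_{n-1}$ but not of type $\F_n$, a contradiction. Hence $SV_{H_n}$ is not of type $\F_n$, which combined with the positive half finishes the proof.

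The only real obstacle, if there is one, is checking that the stabilizer in $H_n$ of an arbitrary finite subset $T \subseteq \{1,\dots,n\}\times\N$ really is of type $\F_{n-1}$, but this is a routine consequence of Brown's computation together with the observation that such a stabilizer decomposes as a finite-index subgroup of a product of a Houghton-type group on the cofinite pieces with a finite symmetric group on the deleted points; this is the verification that underlies the preceding corollary and is not specific to the present statement.
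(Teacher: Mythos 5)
Your proposal is correct and follows essentially the same route as the paper: the positive half is the preceding statement of Corollary~G (oligomorphic action with type $\F_{n-1}$ stabilizers, invoking Theorem~D), and the negative half is exactly the quasi-retract argument via Alonso's theorem combined with Brown's result that $H_n$ is not of type $\F_n$. One small inaccuracy in your aside: the stabilizer in $H_n$ of a finite subset $T$ is commensurable with $H_n$ itself (the pointwise stabilizer is isomorphic to $H_n$, and the setwise stabilizer is a finite extension of it), not generally of the form $H_{n-k}\times(\text{finite})$, but since you hedged and the claim that matters is only that these stabilizers are of type $\F_{n-1}$, this does not affect the argument.
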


\medskip

The $SV_{H_n}$ therefore provide an infinite family of simple groups with arbitrary finiteness length. This is the second such construction known, following the examples constructed in \cite{skipper19} using R\"over--Nekrashevych groups.

Let us remark on one final application from \cite{belkZ} of twisted Brin--Thompson groups, which has recently become a bit obsolete. By a result of Belk--Bleak--Matucci, every right-angled Artin group (RAAG) embeds into some $sV$. Thus, for any infinite $S$, any twisted Brin--Thompson group $SV_G$ contains every RAAG. In particular (see \cite[Corollary~E]{belkZ}), $SV_F$, using Thompson's group $F$ as above, provides an example of a simple group of type $\F_\infty$ containing every RAAG.

Quite recently, Salo proved that the situation can be made vastly more straightforward: $2V$ itself already contains every RAAG \cite{salo}. This is especially striking since $V=1V$ does not even contain $\Z^2*\Z$ \cite{bleak13}, and hence does not contain any RAAG whose defining graph features an edge and a vertex not adjacent to either endpoint of the edge. Thus, there is no need to use infinite $S$, or to worry about a ``twisting'' group $G$ that has a nice enough action on $S$ to get $SV_G$ to be of type $\F_\infty$. However, twisted Brin--Thompson groups do still provide the only known examples revealing the following:

\medskip

\begin{corollary}\label{cor:sep_all_raags}
For every $n\in\N$, there exists a simple group of type $\F_{n-1}$ but not $\F_n$ containing every RAAG.
\end{corollary}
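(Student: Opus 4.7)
The plan is to take the Houghton-group example of a simple group of finiteness length $n-1$ and exhibit every RAAG as a subgroup of it. Concretely, set $S=\{1,\dots,n\}\times\N$ with the standard action of Houghton's group $H_n$; the already-quoted corollary from \cite{belkZ} gives that $SV_{H_n}$ is simple, of type $\F_{n-1}$, and not of type $\F_n$. It therefore remains to embed every RAAG into $SV_{H_n}$.

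For this I would appeal to Salo's theorem \cite{salo} that every RAAG embeds into $2V$, and then show $2V\hookrightarrow SV_{H_n}$. The embedding is the standard ``confine $2V$ to two coordinates of $\C^S$'' construction, enabled by the fact that $S$ is infinite. Fix any two distinct elements $t_1,t_2\in S$. Given $f\in 2V$ represented by partitions $\{B(\varphi_i)\}_{i=1}^m$ and $\{B(\psi_i)\}_{i=1}^m$ of $\C^2$, define finitely supported functions $\hat\varphi_i,\hat\psi_i\colon S\to\{0,1\}^*$ by $\hat\varphi_i(t_j)\defeq\varphi_i(j)$ and $\hat\psi_i(t_j)\defeq\psi_i(j)$ for $j\in\{1,2\}$, and $\hat\varphi_i(s)\defeq\hat\psi_i(s)\defeq\varnothing$ for every other $s\in S$. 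Then $\{B(\hat\varphi_i)\}$ and $\{B(\hat\psi_i)\}$ are valid partitions of $\C^S$, and the corresponding prefix-replacement homeomorphism (with trivial twists $\gamma_i=1$) is an element of the untwisted subgroup $SV\le SV_{H_n}$. This defines the candidate map $f\mapsto\hat f$.

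It remains to verify that $f\mapsto\hat f$ is a well-defined injective group homomorphism. Well-definedness is immediate from the observation that halving a cone in $\C^2$ along coordinate $j$ corresponds, under extension, to halving the extended cone along coordinate $t_j$, so any two representations of the same element of $2V$ yield the same extended homeomorphism. The homomorphism property follows by taking a common refinement of the relevant domain/range partitions in $\C^2$ for two composable elements and extending. Injectivity is clear, since any nontrivial element of $2V$ already acts nontrivially on the $\{t_1,t_2\}$-coordinates of $\C^S$. Composing this embedding with Salo's gives the desired inclusion of every RAAG into $SV_{H_n}$, establishing the corollary. The only point requiring any care is the routine bookkeeping in checking that the extended families are genuine partitions of $\C^S$ and that composition is respected; there is no serious obstacle.
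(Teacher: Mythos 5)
Your proposal is correct and follows essentially the same approach as the paper: pick the Houghton-group example $SV_{H_n}$ (already shown to be simple, of type $\F_{n-1}$ but not $\F_n$), and then use the fact that $S$ is infinite to embed a Brin--Thompson group that contains every RAAG. The paper's proof is briefer, implicitly relying on the preceding discussion (via Belk--Bleak--Matucci that every RAAG embeds into some $sV$, with $sV\hookrightarrow SV_G$ for infinite $S$), while you reach for Salo's result and $2V\hookrightarrow SV_{H_n}$ and spell out the coordinate-restriction embedding explicitly; the difference is cosmetic.
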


\medskip

\begin{proof}
The twisted Brin--Thompson groups $SV_{H_n}$, using the Houghton groups $H_n$ as above, provide such a family, since $S=\{1,\dots,n\}\times\N$ is infinite.
\end{proof}

\medskip

Right-angled Artin groups are one of the most prominent families of groups in geometric group theory. It is natural to ask whether other prominent kinds of groups from geometric group theory can also be made to embed into simple groups with good finiteness properties.

\medskip

\begin{question}\label{quest:boone_higman}
Do each of the following groups embed (quasi-isometrically?) into a finitely presented simple group (even of type $\F_\infty$?) Braid groups, mapping class groups, $\Out(F_n)$, CAT(0) groups, hyperbolic groups.
\end{question}

\medskip

To the best of our knowledge, this question is generally open, and it would be interesting to try make progress on it using twisted Brin--Thompson groups.

\section{An improved finite presentability result}\label{sec:improved_fin_pres}

In this final section, we prove an improvement of \cite[Theorem~D]{belkZ}, which is not all that much better for $n>2$ but is, we believe, notably better for $n=2$. Let us recall the theorem and then state our improvement.

\medskip

\begin{theorem*}\cite[Theorem~D]{belkZ}
Suppose the action of $G$ on $S$ is oligomorphic, that $G$ is of type $\F_n$, and that for any finite subset $T\subseteq S$ the stabilizer in $G$ of $T$ is of type $\F_n$. Then $SV_G$ is of type $\F_n$.
\end{theorem*}

\medskip

\begin{theorem}\label{thrm:good_action_n}
Suppose the action of $G$ on $S$ is oligomorphic, that $G$ is of type $\F_n$, and that for any non-empty finite subset $T\subseteq S$ the stabilizer in $G$ of $T$ is of type $\F_{n-1}$. Then $SV_G$ is of type $\F_n$.
\end{theorem}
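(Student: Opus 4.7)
The plan is to revisit the proof of \cite[Theorem~D]{belkZ} and invoke Brown's finiteness criterion in its dimension-sensitive form. That is, instead of requiring every cell stabilizer of the relevant Stein--Farley-style complex to be of type $\F_n$, I would only require that each $i$-cell stabilizer be of type $\F_{n-i}$, which still suffices to conclude $SV_G$ is of type $\F_n$ once the complex is $(n-1)$-connected and has $SV_G$-cocompact $n$-skeleton.

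First I would import wholesale the geometric and topological ingredients from \cite{belkZ}: the Stein--Farley complex $\Stein$ on which $SV_G$ acts, the discrete Morse function used to filter it, the $SV_G$-cocompactness of its $n$-skeleton (which is where the oligomorphic hypothesis is used), and the descending-link arguments establishing that the Morse sublevel sets are eventually $(n-1)$-connected. None of this uses the strength of the original stabilizer hypothesis, so it carries over verbatim.

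The heart of the matter is a careful, dimension-by-dimension analysis of cell stabilizers. A $0$-cell of $\Stein$ corresponds essentially to the trivial dyadic partition of $\C^S$ decorated by an element of $G$, and its $SV_G$-stabilizer is commensurable with $G=\Stab_G(\emptyset)$, hence of type $\F_n$ by hypothesis. A cell of positive dimension, on the other hand, corresponds to a non-trivial chain of dyadic refinements, and its stabilizer decomposes, up to a finite-index extension by permutation-type factors, as a product $\prod_j \Stab_G(T_j)$ where the $T_j\subseteq S$ are the finite subsets naturally attached to the pieces of the partitions in the chain. The crucial observation is that at least one $T_j$ must be non-empty, because any non-trivial dyadic refinement records at least one coordinate of $S$ on which a prefix has been split; therefore every non-trivial factor in the product is of type $\F_{n-1}$ by hypothesis. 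Since finite extensions and direct products preserve type $\F_{n-1}$, every positive-dimensional cell stabilizer is of type $\F_{n-1}$.

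With this refined stabilizer picture in hand, the dimension-sensitive Brown criterion applies: vertex stabilizers are of type $\F_n=\F_{n-0}$, while cells of dimension $i\ge 1$ have stabilizers of type $\F_{n-1}\supseteq \F_{n-i}$. Combined with the inherited connectivity and cocompactness, this yields type $\F_n$ for $SV_G$. The main obstacle I anticipate is the bookkeeping in the cell-stabilizer analysis, and in particular verifying cleanly that every positive-dimensional cell of $\Stein$ records at least one distinguished coordinate of $S$; once that combinatorial observation is extracted from the construction of $\Stein$ in \cite{belkZ}, the improvement over \cite[Theorem~D]{belkZ} drops out immediately.
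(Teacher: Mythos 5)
Your proposal is correct and takes essentially the same route as the paper: apply Brown's criterion in its dimension-sensitive form (a $p$-simplex stabilizer need only be of type $\F_{n-p}$) to the complex from \cite{belkZ}, keeping the connectivity and cocompactness arguments intact, and showing that vertex stabilizers are of type $\F_n$ while positive-dimensional simplex stabilizers are of type $\F_{n-1}$ via stabilizers of non-empty finite subsets of $S$. One small imprecision worth noting: a vertex stabilizer is a wreath product $G\wr\Symm_k$ (by \cite[Lemma~6.3]{belkZ}), which is not commensurable with $G$ when $k>1$ and $G$ is infinite, but it is still of type $\F_n$ since that property is closed under finite direct products and finite extensions.
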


\medskip

In particular, when $n=2$ we get:

\medskip

\begin{theorem}\label{thrm:good_action_fp}
Suppose the action of $G$ on $S$ is oligomorphic, that $G$ is finitely presented, and that for any non-empty finite subset $T\subseteq S$ the stabilizer in $G$ of $T$ is finitely generated. Then $SV_G$ is finitely presented.\qed
\end{theorem}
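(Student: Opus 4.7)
My plan is to run the same Stein--Farley plus Brown's criterion argument used in the proof of \cite[Theorem~D]{belkZ}, but to invoke the sharper form of Brown's criterion in which a $p$-cell stabilizer is only required to be of type $\F_{n-p}$, rather than uniformly of type $\F_n$. Specializing to $n=2$ this says: vertex stabilizers must be finitely presented, edge stabilizers must be finitely generated, and $2$-cell stabilizers satisfy $\F_0$ for free. This dimensional asymmetry is precisely what matches the asymmetric hypotheses in the statement: finite presentability is only assumed on $G$ itself (the ``empty subset'' stabilizer), while only finite generation is assumed on stabilizers of non-empty finite subsets.

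Concretely, I would reuse, essentially verbatim, the Stein--Farley-type complex $\Stein$ on which $SV_G$ acts together with the cocompact filtration $\Stein_0 \subseteq \Stein_1 \subseteq \cdots$ constructed in \cite{belkZ}. The two main structural inputs from that paper, namely that $\Stein$ is contractible and that the filtration is essentially $(n-1)$-connected under oligomorphicity, both rest on the combinatorics of dyadic brick partitions of $\C^S$ and on the orbit structure of the $G$-action on finite subsets of $S$. Neither argument uses the finiteness properties of stabilizers in $G$, so both carry over without change. Cocompactness of each $\Stein_k$ modulo $SV_G$ also follows from oligomorphicity exactly as in \cite{belkZ}.

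What needs fresh attention is the analysis of cell stabilizers and the match to the $\F_{2-p}$ requirement. A vertex of $\Stein$ corresponds to the coarsest-style data, for which the associated finite subset of $S$ is empty, so its stabilizer in $SV_G$ is (a finite extension of) $G$ itself, which is finitely presented by hypothesis. A cell of positive dimension instead records at least one genuine split of $\C^S$, so every brick $B(\psi_i)$ involved carries non-empty support $\Supp(\psi_i) \subseteq S$, and the stabilizer is (a finite extension of) a finite product of stabilizers in $G$ of non-empty finite subsets of $S$. Since the class of finitely generated groups is closed under finite products and under passage to finite-index overgroups, every such edge and $2$-cell stabilizer is finitely generated. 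Brown's criterion then concludes that $SV_G$ is of type $\F_2$.

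The main obstacle, and the only step not already explicit in \cite{belkZ}, is the bookkeeping that splits cell stabilizers cleanly along the ``empty vs.\ non-empty support'' dichotomy: one must confirm that vertex cells are the only cells whose stabilizer analysis forces one to consider the full group $G$, while every positive-dimensional cell decomposes purely into stabilizers of non-empty supports. In the description of $\Stein$ from \cite{belkZ} this reduces to the observation that only the trivial cone $B(\varnothing) = \C^S$ has empty support, so any dyadic brick partition appearing in a positive-dimensional cell features only bricks with non-empty support. Once that combinatorial point is pinned down, the finite-generation hypothesis on stabilizers of non-empty finite subsets is exactly what is needed, and the theorem follows.
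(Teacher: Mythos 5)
Your proposal matches the paper's proof of Theorem~\ref{thrm:good_action_n} (of which this is the $n=2$ case) in all essentials: reuse the Stein--Farley complex and cocompact filtration from \cite{belkZ}, invoke Brown's criterion in its graded form requiring only type $\F_{n-p}$ for $p$-cell stabilizers, and separate vertex stabilizers (wreath products involving $G$, hence finitely presented) from positive-dimensional cell stabilizers (built from stabilizers of non-empty finite subsets, hence finitely generated). The one small imprecision is that positive-dimensional cell stabilizers, as handled via \cite[Corollary~6.6]{belkZ}, may still involve $G$ itself as a factor rather than ``purely'' stabilizers of non-empty supports, and a vertex stabilizer is a finite extension of a finite power $G^k$ rather than of $G$; neither affects the argument since finitely presented implies finitely generated and both closure properties you cite still apply.
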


\medskip

The reason we think this is notably better in the $n=2$ case is that, in general, it is much easier to prove that a group is finitely generated than to prove it is finitely presented.

\medskip

\begin{proof}[Proof of Theorem~\ref{thrm:good_action_n}]
To prove Theorem~\ref{thrm:good_action_n}, we need to delve into some of the details of the proof of \cite[Theorem~D]{belkZ} given in \cite{belkZ}. As explained in that proof, there is a contractible complex $X$ on which $SV_G$ acts, and a filtration of $X$ into $SV_G$-invariant cocompact sublevel sets $X_m$, which are increasingly highly connected as $m$ goes to $\infty$. In the proof of \cite[Theorem~D]{belkZ}, the stabilizer of any simplex was of type $\F_n$, and all of the above was enough to conclude by \cite[Theorem~6.2]{belkZ} that $SV_G$ is of type $\F_n$.

Now we have weakened the hypotheses, and so we no longer necessarily have that the stabilizer in $SV_G$ of every simplex in $X$ is of type $\F_n$. However, this was not necessary to conclude that $SV_G$ is of type $\F_n$; indeed, \cite[Theorem~6.2]{belkZ} follows from Brown's Criterion (as in \cite{brown87}), and in Brown's Criterion it is only necessary to assume that the stabilizer of every $p$-simplex is of type $\F_{n-p}$. Thus, in order to get the conclusion we want, it suffices to prove that the stabilizer in $SV_G$ of every vertex of $X$ is of type $\F_n$, and the stabilizer of every positive dimensional simplex of $X$ is of type $\F_{n-1}$. (In fact it would suffice to get the stabilizer of each $p$-simplex to be of type $\F_{n-p}$, but with twisted Brin--Thompson groups the structure of the stabilizers of higher dimensional cells is not sufficiently different than that of the stabilizers of edges for us to make any meaningful weakening of the hypotheses that leverages this.)

By \cite[Lemma~6.3]{belkZ}, the stabilizer in $SV_G$ of any vertex in $X$ is isomorphic to a wreath product of $G$ with a finite symmetric group. Since $G$ is of type $\F_n$, and since being of type $\F_n$ is preserved under taking finite direct products and finite extensions, we know that the stabilizer of any vertex is of type $\F_n$. It remains to show that positive dimensional simplices have stabilizers of type $\F_{n-1}$. For this, we can apply \cite[Corollary~6.6]{belkZ} using $n-1$ instead of $n$: since $G$ is of type $\F_n$ it is also of type $\F_{n-1}$, and our assumption is that the stabilizer in $SV_G$ of any non-empty finite subset of $S$ is of type $\F_{n-1}$, so by \cite[Corollary~6.6]{belkZ} the stabilizer of any simplex in $X$ is of type $\F_{n-1}$, as desired. (There it refers to a simplex of a complex called $|P_1|$, but $X$ is a subcomplex of this, so the same result holds.)
\end{proof}

\medskip

One concrete potential future application of Theorem~\ref{thrm:good_action_fp} (suggested by Jim Belk) is to prove that every finitely generated contracting, or finitely presented, self-similar group, and the R\"over--Nekrashevych group associated to every such self-similar group, embeds in a finitely presented simple group. Given such a self-similar group $G$, the R\"over--Nekrashevych group $V_d(G)$ is finitely presented by \cite[Theorem~5.9]{nekrashevych18} and \cite[Theorem~4.15]{skipper19}, and the commutator subgroup $[V_d(G),V_d(G)]$ is simple, but it may have infinite index. However, it seems likely that $V_d(G)$ admits an action as in Theorem~\ref{thrm:good_action_fp} and hence embeds in a finitely presented simple group. We leave this for future work.

As another remark, it would be nice to be able to only have to assume that the stabilizer in $G$ of any finite subset $T\subseteq S$ is of type $\F_{n-|T|}$, and still get to conclude that $SV_G$ is of type $\F_n$, but as remarked in the proof above there is not a convenient way to differentiate edge stabilizers from higher dimensional simplex stabilizers, so it is not clear whether something like this is possible.

\medskip

Let us conclude with a discussion of some implications and questions regarding decidability of the word problem. Recall that the Boone--Higman Theorem states that a finitely generated group has decidable word problem if and only if it embeds in a simple subgroup of a finitely presented group \cite[Theorem~I]{boone74}. The simple group can be taken to be finitely generated by a result of Thompson \cite{thompson80}. Boone and Higman also asked a question, which has since turned into a conjecture, called the Boone--Higman Conjecture:

\medskip

\begin{conjecture}[Boone--Higman]\label{conj:bh}
Every finitely presented group with decidable word problem embeds in a finitely presented simple group.
\end{conjecture}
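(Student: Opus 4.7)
The plan is to attempt Conjecture~\ref{conj:bh} via the reduction supplied by Theorem~\ref{thrm:good_action_fp}. Given a finitely presented group $\Gamma$ with decidable word problem, the strategy is to construct a finitely presented overgroup $G \supseteq \Gamma$ together with a faithful oligomorphic action of $G$ on a set $S$ whose stabilizers of non-empty finite subsets are finitely generated. Theorem~\ref{thrm:good_action_fp} would then embed $G$, and hence $\Gamma$, into the finitely presented simple group $SV_G$, establishing the conjecture. This separates the problem into two steps: first find a finitely presented group containing $\Gamma$, and then produce a sufficiently rigid oligomorphic action of it.

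First I would invoke the classical Boone--Higman Theorem to embed $\Gamma$ into a simple subgroup of some finitely presented group $H$; this produces an ambient finitely presented group but gives no control over how it might act. Next I would look for a set $S$ on which $H$, or a mild enlargement containing $\Gamma$, acts faithfully and oligomorphically. A useful reformulation is model-theoretic: oligomorphic permutation groups are precisely the automorphism groups of $\omega$-categorical structures, so the task becomes to realize $\Gamma$ inside a finitely presented group of automorphisms of some $\omega$-categorical structure $\mathcal{M}$, in such a way that the pointwise stabilizer in $G$ of any finite tuple from $\mathcal{M}$ is finitely generated. For the concrete families listed in Question~\ref{quest:boone_higman}, I would try tailored constructions: for braid groups, an action on a dyadically-enriched set of arcs in a punctured disk; for mapping class groups and $\Out(F_n)$, analogous actions on curve or arc complexes (or free factor complexes) augmented by Thompson-like symmetries that force finitely many orbits on each $k$-element subset; for hyperbolic or CAT(0) groups, actions on boundary-type sets enriched so as to become $\omega$-categorical while retaining small stabilizers.

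The principal obstacle is the triple tension between (i) finite presentability of $G$, (ii) oligomorphicity of its action on $S$, and (iii) finite generation of stabilizers of finite subsets. The most natural oligomorphic groups, such as $\Symm(\N)$ or $\Aut$ of the random graph, are uncountable and hence fail (i) outright; conversely, typical finitely presented groups almost never act oligomorphically, and the known examples where they do (Thompson's $F$ on $(0,1)\cap\Z[\tfrac{1}{2}]$, Houghton's $H_n$ on $\{1,\dots,n\}\times\N$) are highly special. There is no general mechanism for enlarging a given finitely presented $\Gamma$ into a finitely presented group admitting such an action while keeping point stabilizers finitely generated. Overcoming this would amount to producing a new class of finitely presented oligomorphic groups rich enough to absorb every finitely presented group with decidable word problem, and it is precisely at this step that current techniques -- including the twisted Brin--Thompson machinery itself -- stop short. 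A more modest but realistic intermediate goal, and the one I would actually attempt first, is to settle Question~\ref{quest:boone_higman} one family at a time by explicit construction of the action required by Theorem~\ref{thrm:good_action_fp}, using the full conjecture as guiding heuristic rather than immediate target.
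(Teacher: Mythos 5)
The statement you were asked about is the Boone--Higman Conjecture, which is an open problem: the paper does not prove it, and neither does your proposal. To be fair, you do not really claim to --- what you have written is a research strategy, not a proof, and you candidly identify where it breaks down. Your strategy (reduce via Theorem~\ref{thrm:good_action_fp}: embed the given group $\Gamma$ into a finitely presented group $G$ admitting a faithful oligomorphic action on a set $S$ with finitely generated stabilizers of non-empty finite subsets, then pass to $SV_G$) is exactly the reduction the paper itself advertises as a possible route toward the conjecture for particular groups, so your framing is aligned with the paper's intent.

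The genuine gap is the one you name yourself: there is no construction, for an arbitrary finitely presented group $\Gamma$ with decidable word problem, of a finitely presented overgroup $G$ with the required oligomorphic action and finitely generated finite-subset stabilizers. Invoking the classical Boone--Higman Theorem only yields an embedding of $\Gamma$ into a simple subgroup of some finitely presented group $H$; it gives no action of $H$ on any set, let alone an oligomorphic one with controlled stabilizers, and no known machinery upgrades $H$ to such a $G$. The tailored actions you sketch for braid groups, mapping class groups, $\Out(F_n)$, and (relatively) hyperbolic or $\CAT(0)$ groups are speculative: curve complexes, arc complexes, and boundaries do not obviously admit the ``finitely many orbits of $k$-element subsets'' condition, and enriching them to force oligomorphicity while keeping the acting group finitely presented and the stabilizers finitely generated is precisely the unsolved problem. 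So your text should be read as a (reasonable) plan of attack on Question~\ref{quest:boone_higman} case by case, consistent with the paper's closing discussion, but it does not and cannot currently be spliced in as a proof of Conjecture~\ref{conj:bh}.
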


\medskip

Since any twisted Brin--Thompson group is simple, if $SV_G$ (and hence $G$) is finitely presented, then every subgroup of $SV_G$, for instance $G$, must have decidable word problem. Since there do exist finitely presented groups with undecidable word problem, this implies that $G$ being finitely presented is not sufficient for $SV_G$ to be finitely presented. The situation in Theorem~\ref{thrm:good_action_fp} thus provides a non-obvious sufficient condition to ensure a finitely presented group (along with any of its finitely generated subgroups) has decidable word problem. As a remark, James Hyde also found a direct proof that a group admitting an action as in Theorem~\ref{thrm:good_action_fp} has decidable word problem \cite{hydeMO}. Finally, note that all of the groups in Question~\ref{quest:boone_higman} have decidable word problem, and thus conjecturally should embed into finitely presented simple groups; it would be interesting to try and resolve this by proving that they embed into groups admitting actions as in Theorem~\ref{thrm:good_action_fp}.

\bibliographystyle{alpha}

\end{document}